\documentclass[11pt]{article}

\usepackage{amsmath,amssymb,amsthm}
  \usepackage{fullpage} 

\newtheorem{theorem}{Theorem}

\newtheorem{remark}{Remark}

\title{How to pick your team with no size restriction} 
\author{}
\date{}

\begin{document}
\author{
		Hiranya Kishore Dey \\ 
		Department of Mathematics,\\ 
        Indian Institute of Technology, Jammu.\\
        Jammu, 181221, Jammu and Kashmir, India. \\
        email: hiranya.dey@iitjammu.ac.in\\ \\     }

 \maketitle

\begin{abstract}
\medskip 
 
Settling a problem raised
by Eccles in 2015, Narayanan in 2026 considers 
a two-player game in which two captains alternately select players while the opponent decides to which team each selected player is assigned. 
Moreover, the two teams are required to
have equal cardinalities, and Narayanan proved that the second player has a non-losing strategy. 

In this paper, we study a natural variant in which the teams are allowed to have different cardinalities, and the winner is determined by 
comparing the average strengths of the two teams. We show that, in this setting, the parity of the total number of players completely determines which player has a non-losing strategy: the first player has a non-losing strategy when the number
of players is even, while the second player has a non-losing strategy when it is odd.
\end{abstract} 
	
	{\bf 2020 MSC}: 91A46, 91A05; 05A20

\section{Introduction}

 Eccles \cite{Eccles} communicated the following game to Narayanan as a puzzle.  

\medskip 
\noindent 
{\bf Game 1:} Suppose that there are $2m\ge2$ players whose strengths are represented by real
numbers
\[
a_1 \leq a_2 \leq \ldots \leq a_{2m}.
\]
 Alice and Bob act as captains and they must split them into two teams of $m$ players each. In every round, one captain acts as the picker by selecting an unassigned player, while the other serves as the chooser and decides which team that player joins.
 Alice 
 begins as the picker and Bob as the chooser, with the two roles alternating thereafter. The game continues until one captain has assembled a team of $m$ players, at which point all remaining players are automatically allocated to the other team. The captain whose team has the greater total score is declared the winner.
 
  Narayanan \cite{Narayanan} proved the following. 
\begin{theorem}
\label{thm:Narayanan}
Alice cannot win Game 1 on any board of even cardinality.
\end{theorem}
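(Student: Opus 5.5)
The plan is to give Bob an explicit strategy built on the consecutive pairing $P_i=\{a_{2i-1},a_{2i}\}$, $1\le i\le m$, and to prove by an exchange argument that it guarantees Bob a team total at least Alice's. Since both teams end with exactly $m$ players, it suffices to produce a bijection $\phi$ from Alice's final team to Bob's final team with $\phi(x)\ge x$ for every $x$. I will try to build this bijection one pair at a time.

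\textbf{The strategy.} Rounds come in blocks of two: Alice picks and Bob chooses, then Bob picks and Alice chooses.
\begin{itemize}
\item When Alice picks a player $x$ from an untouched pair $P_i$, Bob puts $x$ on his own team if $x=a_{2i}$, and on Alice's team if $x=a_{2i-1}$.
\item In the following round Bob picks the partner $y$ of $x$, and Alice decides where $y$ goes.
\end{itemize}
If Alice sends $y$ to the opposite team, then $P_i$ is split with Bob holding the larger element, and $\phi$ maps Alice's element of $P_i$ to Bob's. This is the good case. The point of the pairing is that, as long as Alice splits pairs, every block ends with all pairs either untouched or split in Bob's favour.

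\textbf{The bad case.} Alice may instead place $y$ on the same team as $x$, creating a \emph{doubled} pair. Two things then happen: one team pulls ahead in size by two, and the pairs are no longer closed off block by block. To handle this, I plan to maintain the following invariant after each block.
\begin{itemize}
\item Pairs doubled onto Alice's team are matched, in a chain, with pairs doubled onto Bob's team. The matching runs in order of index, pairing Alice's doubled pair $P_i$ with Bob's doubled pair $P_j$ for some $j>i$, so that every element of Alice's pair is dominated by an element of Bob's.
\item Any unmatched doubled pairs, together with the size difference they create, are ``owed'' to the other team.
\item The cardinality cap will force the debt to be repaid. Once a team reaches $m$, the remaining players, which form whole untouched pairs, are dumped onto the other team.
\end{itemize}
Bob's choice of which element of an untouched pair to keep will be adjusted by the current debt. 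If Alice's team is ahead in size, Bob assigns Alice's pick to himself regardless of rank, and symmetrically if Bob's team is ahead.

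\textbf{Expected obstacle.} The main obstacle is exactly this bad case: showing that Alice cannot exploit doubling to gain a pair $P_i$ with a large index while Bob is stuck with low pairs. One concrete danger is Alice picking a small $a_{2i-1}$, Bob sending it to Alice, and Alice then doubling the pair onto Bob's team, which hands Bob two weak players. I will first try to show that this merely shifts the balance. Bob's team is then two players closer to the cap, so at the end Alice must absorb the leftover pairs. Bob, picking the partners, controls which pairs remain, and I expect to show that the leftover pairs can be made the weakest ones.

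If the pairwise bookkeeping breaks, the fallback is a potential-function argument. I would define $\Phi$ to be Bob's current total minus Alice's current total, plus a correction term for the unassigned players that depends on the size difference of the teams. I would then check that $\Phi\ge 0$ is preserved by every block under a suitably tuned version of the strategy above, and that at the end of the game $\Phi$ reduces to the actual score difference.
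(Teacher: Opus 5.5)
The paper does not prove this statement; it quotes it from Narayanan. So I am judging your plan on its own, and it has a genuine gap that goes deeper than the bookkeeping you flag.

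\textbf{The reduction cannot be forced.} A bijection $\phi$ from Alice's team to Bob's with $\phi(x)\ge x$ would suffice, but Bob cannot force one, already for $2m=4$. Take $a_1<a_2<a_3<a_4$ and let Alice open with $a_2$. A dominating bijection requires Bob to end with $a_4$ and Alice with $a_1$. Alice can prevent this either way:
\begin{itemize}
\item If Bob keeps $a_2$, Alice keeps his next pick if it is $a_4$. Otherwise she sends it to Bob, which fills his team without $a_4$.
\item If Bob gives $a_2$ to Alice, she sends his next pick to him if it is $a_1$. Otherwise she keeps it, which fills her team without $a_1$.
\end{itemize}
Yet Bob does not lose here. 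If $a_2+a_3\ge a_1+a_4$, he keeps $a_2$ and picks $a_3$; otherwise he gives $a_2$ to Alice and picks $a_1$. The correct reply depends on the sign of $(a_2+a_3)-(a_1+a_4)$, that is, on cancellation between sums, which no pair-by-pair dominance scheme can detect.

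\textbf{The concrete strategy also loses outright.} With $2m=4$, let Alice open with $a_3$, the smaller element of $P_2$. Sizes are equal, so your rule sends $a_3$ to Alice. Bob then picks its partner $a_4$, Alice keeps it, and her team is full. She wins, since $a_3+a_4>a_1+a_2$ unless all four strengths coincide. For larger $m$ the debt rule does not help:
\begin{itemize}
\item Alice doubles $P_m$ onto herself in the same way.
\item She then opens $P_1$ with $a_1$, which Bob must now keep under your rule.
\item She sends the partner $a_2$ to Bob as well.
\end{itemize}
Iterating, for even $m$ she ends with exactly the top $m$ strengths.

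So the invariant you hope to maintain (Alice's doubled $P_i$ matched to a doubled $P_j$ of Bob with $j>i$) runs the wrong way. Alice, not Bob, chooses which pair is opened and, by placing the partner, where a doubled pair lands. Your remark that Bob ``controls which pairs remain'' contradicts your own rule that Bob always picks the partner of Alice's pick. The potential-function fallback is not specified, so nothing in the proposal survives these lines of play. What is missing is a genuinely quantitative invariant on sums, for instance an inductive statement for the residual game in which the two teams still need unequal numbers of players.
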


We consider the following variant of Game 1, in which teams are allowed to have different cardinalities and the winner is determined by average strength rather than total strength.
% In this paper, we consider the following variant of Game 1. Unlike Game 1, we allow teams to have different sizes and the winner is determined by the average strength of the resulting teams. Replacing the total strength by the average strength allows teams of unequal cardinality and leads to a qualitatively different optimization criterion. This variant is motivated by scenarios in which a participant may be assigned multiple tasks or matches, making the average strength of the selected team more relevant than its total strength.
More precisely, consider the following. 

\medskip 
\noindent 
{\bf Game 2:} Suppose that there are $n\ge2$ players whose strengths are represented by real
numbers
\[
a_1 \leq a_2 \leq \ldots \leq a_n.
\]
Initially both teams are empty.
Alice first selects one unchosen player, and Bob decides to which team that
player is assigned. Next Bob selects one of the remaining players and Alice
decides to which team that player is assigned. The players continue
alternating in 
this fashion until every player has been assigned.
If one team is empty, then by convention the other team is declared the winner.
Otherwise, the team with the larger average strength wins. Equality of averages
is regarded as a draw.

Let $G(a_1,a_2,\ldots,a_n)$ denote an instance of Game~2 in which the players have strengths \linebreak $(a_1,a_2,\ldots,a_n).$ 
Our main result is the following.

\begin{theorem}
\label{thm:main}
Consider the game $G(a_1,a_2,\ldots,a_n)$ and let 
\[
x = \frac{a_1 + \cdots + a_n}{n}
\]
denote their average.

\begin{enumerate}
\item If $n$ is even, then Alice has a strategy ensuring that she does not lose the game.
\item If $n$ is odd, then Bob has a strategy ensuring that Alice does not win the game.
\end{enumerate}
Moreover, 
if the multiset of values with odd multiplicity is not symmetric about $x$, we have the following.
\begin{enumerate}
\item If $n$ is even, Alice has a strategy ensuring her win.
\item If $n$ is odd, Bob has a strategy ensuring his win.
\end{enumerate}
\end{theorem}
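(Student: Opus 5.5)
The first step is to recenter. Put $b_i=a_i-x$, so that $\sum_i b_i=0$. Let $A$ and $B$ be Alice's and Bob's teams (both nonempty), and let $S_A=\sum_{i\in A}b_i$. Then Bob's team has sum $-S_A$. The average of $A$ exceeds that of $B$ iff $S_A/|A|>-S_A/|B|$, i.e. iff $S_A>0$. So, apart from the empty-team convention, the outcome is the sign of
\[
D=\sum_{i\in A}b_i-\sum_{i\in B}b_i=2S_A .
\]
The game therefore becomes: the picker names an unused value $b$, and the chooser decides whether it enters $D$ as $+b$ or $-b$. Alice wants $D>0$, Bob wants $D<0$, and $D=0$ is a draw. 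The chooser of a value $b$ effectively decides its contribution $\pm|b|$.

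For $n$ even I would split the rounds into consecutive pairs: round $2k-1$ (Alice picks, Bob chooses) and round $2k$ (Bob picks, Alice chooses). Alice's strategy is to always pick a remaining value of smallest absolute value, $v$. Bob's reply $w$ then satisfies $|w|\ge|v|$, and Alice assigns $w$ so that it contributes $+|w|$. Each pair then contributes at least $|w|-|v|\ge0$, so $D\ge0$ at the end.

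For $n$ odd the argument is symmetric. Round $1$ is handled alone: Bob assigns $v_1$ so that it contributes $-|v_1|\le 0$. The remaining rounds pair up as (Bob picks, Alice chooses) followed by (Alice picks, Bob chooses), and Bob picks the remaining value of minimum absolute value. The same inequality gives $D\le0$.

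Two pieces of bookkeeping are needed. First, the empty-team convention. The chooser controls where elements go, so Alice can steer one of her chosen elements into her own team when $D$ would otherwise be unaffected; for instance, she can place a value with $w=0$ in her team, or resolve ties toward ensuring nonemptiness. In the degenerate all-zero case I would check directly that the outcome is a draw or a win for the protected player. Second, when $w$ is assigned to make $+|w|$, it may go to either team, and I need to verify that Alice can still make sure her team is nonempty. If she cannot, the opponent's team is nonempty and, since $D\ge 0$, the only danger is Bob's team being nonempty with hers empty. I would handle this by having Alice take the first value she chooses into her own team when its sign allows it, and by a short separate case analysis otherwise.

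For the strict statements, equality $D=0$ under the above strategy forces, in every pair, $|w|=|v|$ together with sign choices that cancel exactly. I would strengthen the chooser's rule: if $w=v$ (same value, not its negative), the chooser can get net $2|v|$ by choosing the same sign as the opponent when beneficial. So the opponent must keep answering $v$ with $-v$ or must have already exhausted the matching partners. Tracking this, a tie can only be forced when the multiset of values decomposes into equal pairs $\{c,c\}$ plus pairs $\{c,-c\}$. That is exactly the condition that the values of odd multiplicity are symmetric about $0$, i.e. about $x$ in the original scale.

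The main obstacle is this last step: making the refined chooser rule precise, and proving the converse direction that absence of symmetry guarantees at least one strictly positive pair. I would attack it by an invariant on the multiset of remaining absolute values: the remaining multiset stays "odd-symmetric" only if the opponent answers $v$ by its negative. Once this invariant is broken, the minimum-absolute-value picking rule eventually yields a pair with $|w|>|v|$, or a pair with $w=v$ that the chooser exploits. The empty-team edge cases are routine by comparison.
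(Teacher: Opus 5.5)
Your core argument is correct and takes a genuinely different route from the paper's. Both proofs recentre, group the moves into two-turn stages, and show that each stage adds a nonnegative amount to $S_A-S_B$. The stage rules differ, though.

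\emph{Even $n$.} The paper has Alice pick $b_{r+1}$, the point where averages of consecutive sorted remaining values cross $0$ (or an extreme value if there is no crossing). She assigns Bob's reply according to whether it lies above or below $b_{r+1}$, and the paper then needs a five-type classification of stages. You pick a value of minimal absolute value and assign Bob's reply by its sign. The per-stage bound $|w|-|v|\ge 0$ is then immediate and insensitive to ties. By contrast, the paper's test $b_t\ge b_{r+1}$ must be read by position when $b_t=b_{r+1}$ comes from below position $r+1$.

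\emph{Odd $n$.} The paper places Alice's opening pick by comparing it with $x$, then invokes the even case as a black box on the remaining $2m$ players with their shifted mean $x'$. Your mirrored argument (first round worth $-|v_1|$, then Bob uses the same minimal-absolute-value rule) treats both parities with one inequality. Your rule is also consistent with the paper's Remark: in $G(2,2,5,8,12,12)$ the player $8$ is the one of smallest centered absolute value.

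Two points, however, are not settled by what you wrote.

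\emph{The empty-team convention.} This is a real obstruction to your even-$n$ strategy as stated, not routine bookkeeping. Take strengths $1,2,4,5$ (centered values $-2,-1,1,2$) and let Alice break ties toward positive values. She opens with $4$; Bob puts it in his own team and picks $2$, which your sign rule sends to Bob. The same happens with $5$ and $1$, so Alice's team is empty and Bob wins by convention. Forcing $1$ into her own team at the end instead gives her average $1$ against $11/3$. Your patch (taking a chosen value into her team ``when its sign allows it'', plus an unspecified case analysis) does not cover this.

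What works is a tie-break. Alice puts $w=0$ into her own team, and among remaining values of minimal absolute value she always picks a negative one when one exists. Suppose her team were still empty at the end. Then every $v$ and every $w$ went to Bob, so every $w<0$ and $D=-\sum_i b_i=0$. Each stage contributes $-v+|w|\ge|w|-|v|\ge 0$, so every stage is tight, forcing $v>0$ and $w=-v$. Then a negative value of the same absolute value was available when Alice picked $v$, contradicting the tie-break.

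For odd $n$ it suffices that Bob takes $v_1$ into his own team whenever $v_1\ge 0$. If $v_1<0$ then $D\le -|v_1|<0$, whereas an empty Bob team would force $D=\sum_i b_i=0$.

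\emph{The strict statements.} These need no refined chooser rule, and the one you propose rests on a false claim. Bob fixes the sign of $v$ before Alice assigns $w$, so against his best reply a stage with $w=v$ nets exactly $0$, not $2|v|$, and nothing forces Bob to answer $v$ by $-v$. The equality case of your basic bound already finishes the job.

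For even $n$, $D=0$ forces $|w|=|v|$ in every stage, so the $n$ values split into pairs $\{c,c\}$ and $\{c,-c\}$. Hence for every $c>0$ the multiplicities of $c$ and $-c$ have the same parity, and $0$ has even multiplicity; that is, the odd-multiplicity values are symmetric about $0$. Without that symmetry $D>0$, so $S_A>0$, Alice's team is automatically nonempty, and she wins (directly, or by convention if Bob's team is empty).

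For odd $n$, $D=0$ forces $v_1=0$ and the same pairing of the other $2m$ values, again giving symmetry. Otherwise $D<0$, which makes both teams nonempty with Bob ahead.

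The ``converse direction'' you single out as the main obstacle is just the contrapositive of this equality analysis. No invariant on the remaining multiset is needed.
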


Theorem \ref{thm:main} stands in sharp contrast to Theorem \ref{thm:Narayanan}. In Game 1, where the two teams are required to have the same cardinality, the second player has a non-losing strategy. In our variant, removing this balance condition fundamentally changes the outcome of the game. 
% the player who moves first has a non-losing
% strategy when the number of players is even, while the second player has a
% non-losing strategy when the number of players is odd.

\section{Proof of Theorem \ref{thm:main}}
\label{sec:proof-thm}

The main challenge in the proof lies in identifying the correct opening move for the first player. 
Indeed, we construct examples (see Remark~\ref{rem:examples}) showing that if Alice deviates from a particular first move, then Bob can force a win under optimal play. 
% We first consider the case when $n$ is even say $n=2m$ and prove the following. 

% \begin{theorem}
% Let $2m$ players have strengths
% \[
% a_1\le a_2\le\cdots\le a_{2m},
% \]
% and let
% \[
% x=\frac1{2m}\sum_{i=1}^{2m}a_i
% \]
% be their average strength.

% Then Alice has a strategy which guarantees that her average strength is at least $x$. Moreover, if the multiset of values occuring an odd number of times is not symmetric about $x$, Alice has a strategy that guarantees that her average strength is more than x. 
% \end{theorem}

\begin{proof}
If $n=2$ or $3$, the result follows by straightforward case analysis. So, we assume $n \geq 4$. 
We first the consider the case when $n$ is even, say $n=2m$. 

\medskip

\noindent 
{\bf \large When $n$ is even:} 

\medskip 
\noindent
We subtract $x$ from every player's strength. This does not affect the game, since both Alice's average and the overall average decrease by the same amount. Hence we may assume throughout that
\[
\sum_{i=1}^{2m}a_i=0.
\]

We now describe Alice's strategy. We partition the game into m stages, each consisting of two turns. In the first turn of a stage, Alice picks a player and Bob assigns that player to a team. In the second turn, Bob picks a player and Alice assigns that player to a team.

At the beginning of each stage, some players remain unassigned. During the stage, exactly two of these players are permanently assigned, one in each turn according to the rules of the game. These two players are then removed from further consideration. Consequently, after the k-th stage, exactly 2k players have been assigned. In particular, after the m-th stage, all 2m players have been assigned.

At the beginning of a stage, suppose the remaining players are
\[
b_1\le b_2\le\cdots\le b_{2k},
\]
where $k \leq m. $

{\bf Case 1.} Assume first that there exists an index $r$ satisfying
\[
\frac{b_r+b_{r+1}}2
\le0 <
\frac{b_{r+1}+b_{r+2}}2.
\]

Alice first picks $b_{r+1}$.

Bob now has four possible responses.

\begin{enumerate}
\item Bob assigns $b_{r+1}$ into his team and picks $b_t$ where $b_t \geq b_{r+1}$. Then Alice assigns $b_t$ to her team.

\item Bob assigns $b_{r+1}$ into his team and picks $b_t$ where $b_t < b_{r+1}$. Then Alice assigns $b_t$ to Bob's team. 

\item Bob assigns $b_{r+1}$ into Alice's team and picks $b_t$ where $b_t \geq b_{r+1}$. Then Alice assigns $b_t$ in her team. 

\item Bob assigns $b_{r+1}$ into Alice's team and picks $b_t$ where $b_t < b_{r+1}$. Then Alice assigns $b_t$ in Bob's team. 
\end{enumerate}

{\bf Case 2.}
If no such index $r$ exists, then all consecutive pair averages are of the same sign. 
If they are all negative, Alice picks the largest remaining player. Independent of whatever Bob does, Alice assigns Bob's chosen player to Bob's team. 

Similarly, if they are all nonnegative, Alice picks the smallest remaining player. Again, independent of whatever Bob does, Alice assigns Bob's chosen player to Alice's team. 

From the strategy itself, it is clear that Alice's team will never be empty. At each stage, exactly one of the following five situations occurs.

\begin{enumerate}
\item Alice receives both players. Observe that if it happens then the average of these two players is non-negative. 

\item Bob receives both players. Observe that if it happens then the average of these two players is less than $0$. 

\item Both Alice and Bob receives one player, with Alice receiving a nonnegative player and Bob receiving a negative player.

\item Each player receives one negative player, with Alice's player  receiving the larger one (possibly equal).

\item Each player receives one positive player, with Alice receiving the larger one (possibly equal).
\end{enumerate}

After removing the two assigned players from further consideration, Alice repeats the same procedure until no players remain.
We now analyse the outcome.
Let

\begin{itemize}
\item $l_1$ be the number of stages where Alice receives both players;
\item $l_2$ be the number of stages where Bob receives both players;
\item $l_3$ be the number of stages where Alice receives a nonnegative player and Bob receives a negative player;
\item $l_4$ be the number of stages where Alice picks one, Bob picks one and both players are negative;
\item $l_5$ be the number of stages where Alice picks one, Bob picks one and both players are nonnegative;
\end{itemize}

Moreover, let

\begin{itemize}
\item $t_1$ denote the average, over the $l_1$ stages, of the sum of the strengths of the two players assigned to Alice;
\item $s_2$ denote the average, over the $l_2$ stages, of the sum of the strengths of the two players assigned to Bob;
\item $t_3$ and $s_3$ denote the average strengths of the players assigned to Alice and Bob, respectively, over the $l_3$ stages;
\item Let $t_4$, $t'_4$ be such that 
Alice's and Bob's average strengths over the 
$l_4$ stages are 
$-t_4$ and $-t_4'$ respectively;
\item $t_5$ and $t_5'$ denote the average strengths of the players assigned to Alice and Bob, respectively, over the $l_5$ stages.
\end{itemize}

By construction, we have 
\begin{enumerate}
    \item $t_1\ge0$,
    \item
 $ s_2 < 0,$
\item $t_3 > s_3$, 
\item $t_4 \leq t_4'$,
\item $t_5 \geq t_5'$.
\end{enumerate}

Therefore Alice's total strength equals

\[
2l_1t_1+l_3t_3+l_5t_5-l_4t_4,
\]

whereas Bob's total strength equals

\[
2l_2s_2+l_3s_3+l_5t_5'-l_4t_4'.
\]

Suppose Alice's total strength is negative. Then

\[
2l_1t_1+l_3t_3+l_5t_5-l_4t_4<0.
\]

Since
\[ 
2l_2s_2\le2l_1t_1, 
l_3s_3\le l_3t_3,
l_5t_5'\le l_5t_5,
l_4t_4'\ge l_4t_4,
\]

it follows that

\[
2l_2s_2+l_3s_3+l_5t_5'-l_4t_4'<0.
\]

Thus Bob's total strength is also negative.
This contradicts
the fact that the total strength of all players is zero.
Hence Alice's total strength is nonnegative, proving that her average strength is at least the overall average.

Finally, suppose the game ends in a draw, that is, equality holds throughout the above argument.

Then every inequality used above must in fact be an equality. Consequently, every stage of Types~$3$, $4$, and $5$ consists of two equal numbers. Indeed, $l_3=0$, while in every stage of Type~$4$ the two negative numbers are equal, and in every stage of Type~$5$ the two nonnegative numbers are equal. Hence every value arising from a stage of Types~$3$, $4$, or~$5$ occurs with even multiplicity.

Therefore, any value occurring with odd multiplicity must arise from a stage of Type~$1$ or~$2$, in which one player receives both numbers. Since the total contribution of these stages is zero, every such value must be accompanied by its negative with the same odd multiplicity. Thus, the multiset of values occurring with odd multiplicity is symmetric about the origin. Recalling that we translated the strengths so that the overall average is $0$, it follows that, in the original setting, this multiset is symmetric about~$x$.

Therefore, if the multiset of values occurring with odd multiplicity is not symmetric about $x$, then a draw is impossible. Since Alice can never lose, she must have a winning strategy. This completes the proof for even $n$. 

% Finally, suppose the game becomes a fraw, that is, assume equality holds.

% Then every inequality used above must itself be an equality. Consequently every stage of Types $3$, $4$, and $5$ consists of two equal numbers. Indeed, $l_e=0$, and in Type $4$ the 
% two negative numbers are equal; and in Type $5$ the two positive numbers are equal.
% Hence every value arising from these stages must have even multiplicity.

% Therefore every value occurring an odd number of times must arise from the stages where one player receives both numbers. Since these stages contribute total sum zero, every such value
% must be accompanied by its negative with the same odd multiplicity. Thus the multiset of values occurring an odd number of times is symmetric about the origin. Thus, the multiset of values occurring an odd number of times is symmetric about $x$. 

% Therefore, if the multiset of values occurring an odd number of times is not symmetric about $x$, Alice has a winning strategy. 
% This completes the proof for $n$ even. 

We now consider the case when $n$ is odd.

\medskip

\noindent
{\bf \large When $n$ is odd:}

\medskip

Let
\[
S=\sum_{i=1}^{2m+1}a_i=(2m+1)x.
\]

Suppose that, on the first move, Alice names the player of strength $a_i$.

If $a_i>x$, Bob assigns this player to his team. The remaining $2m$ players have average
\[
x'
=\frac{S-a_i}{2m}
=x+\frac{x-a_i}{2m}
<x.
\]
The remainder of the game is an even-player game with Bob choosing first. By the even-player case, Bob can ensure that his average strength over the remaining players is at least $x'$, and hence at least Alice's average strength over the remaining players. Since Bob's first player has strength $a_i>x'$, his overall average strength is strictly greater than $x'$. As $x'<x$, it follows that Bob's overall average strength is strictly greater than $x$.

Now suppose that $a_i<x$. Bob assigns this player to Alice's team. The remaining $2m$ players have average
\[
x'
=\frac{S-a_i}{2m}
=x+\frac{x-a_i}{2m}
>x.
\]
Again, the remainder of the game is an even-player game with Bob choosing first. By the even-player case, Bob can ensure that his average strength over the remaining players is at least $x'$. Since every player assigned to Bob's team comes from the remaining set, his final average strength is at least $x'>x$.

Finally, suppose that $a_i=x$. Bob assigns this player to his team. The remaining $2m$ players also have average $x$. If the multiset of values occurring with odd multiplicity is not symmetric about $x$, then, the remaining multiset also fails the symmetry condition. Hence the strict part of the even-player case applies, implying that Bob can ensure that his average strength over the remaining players is strictly greater than $x$. Since Bob's first player also has strength $x$, his overall average strength is likewise strictly greater than $x$.

This completes the proof.
\end{proof}

\begin{remark}
\label{rem:examples}
The condition that Alice's first move is $a_{r+1}$ is essential. Indeed, there are instances in which Bob has a winning strategy whenever Alice opens with any other player. For example, consider the game
\[
G(2,2,5,8,12,12).
\]
Here,
\[
x=\frac{41}{6},
\]
so $r=3$, and hence $a_{r+1}=8$. Therefore, Alice's optimal opening move is to choose the player of strength $8$. In fact, if Alice starts with any player other than $8$, Bob can force a win. More precisely:
\begin{itemize}
    \item if Alice chooses $5$, then Bob assigns $5$ to Alice's team and chooses $8$;
    \item if Alice chooses $2$, then Bob assigns $2$ to Alice's team and chooses $8$;
    \item if Alice chooses $12$, then Bob assigns $12$ to his own team and chooses $5$.
\end{itemize}
In each of the above cases, it is straightforward to verify that Bob wins under optimal play.
\end{remark}

\section*{Acknowledgements} 
The author thanks Bhargav Narayanan for writing such a beautiful paper and thus inspiring him to think hard about silly things. The author thanks Digjoy Paul for letting him know regarding this paper. The author thanks Dipnit Biswas, Indrajit Saha and Nilanjan Bag for discussions. 
 The author also acknowledges the INSPIRE Faculty Fellowship (Reference No. DST/INSPIRE/04/2024/004712; Faculty Registration No. IFA24-MA 205) for support 
during the preparation of this work, and thanks the Department of Science and Technology (DST), 
India, for funding. The author also appreciates the excellent research environment
provided by the Department of Mathematics at the Indian Institute of Technology Jammu.

\end{document}